\numberwithin{equation}{section}
\newtheorem{theorem}{Theorem}[section]
\newtheorem{lemma}[theorem]{Lemma}
\newtheorem{proposition}[theorem]{Proposition}
\theoremstyle{definition}
\newtheorem{definition}{Definition}[section]}
\def\R{ \mathbb R}
\def\H{H^\infty}
\def\D{{ \mathbb D}}
\def\C{{ \mathbb C}}
\def\Z{{ \mathbb Z}}
\def\N{{ \mathbb N}}
\def\mN{{ \mathbb N}}
\def\mC{{ \mathbb C}}
\def\e{\varepsilon}
\def\union{\cup}
\def\Union{\bigcup}
\def\inter{\cap}
\def\Inter{\bigcap }
\def\ov{\overline}
\def\ss{\subseteq}
\def\buildrel#1_#2^#3{\mathrel{\mathop{\kern 0pt#1}\limits_{#2}^{#3}}}
\def\BP{Blaschke product}
\begin{document}

\title[Topological stable rank of $H^\infty(\Omega)$]{Topological stable
  rank of $H^\infty(\Omega)$ for circular domains $\Omega$}


\author{Raymond Mortini}
\address{\small D\'{e}partement de Math\'{e}matiques\\
\small LMAM,  UMR 7122,
\small Universit\'{e} Paul Verlaine\\
\small Ile du Saulcy\\
\small F-57045 Metz, France}

\email{mortini@poncelet.univ-metz.fr}

\author {Rudolf Rupp}
\address{\small Fakult\"at Allgemeinwissenschaften \\
\small Georg-Simon-Ohm-Hochschule N\"urnberg\\
\small Kesslerplatz 12, D-90489 N\"urnberg, Germany}

\email{rudolf.rupp@ohm-hochschule.de}

\author{Amol Sasane}
\address{\small Department of Mathematics\\
\small London School of Economics\\
\small Houghton Street, London WC2A 2AE, United Kingdom}

\email{A.J.Sasane@lse.ac.uk}

\author[B. D. Wick]{Brett D. Wick$^*$}
\address{School of Mathematics\\ Georgia Institute of Technology\\ 686 Cherry Street\\ Atlanta, GA 30332-0160 USA}
\email{wick@math.gatech.edu}
\thanks{$*$ Research supported in part by a National Science Foundation DMS Grant \# 0752703.}

\subjclass{Primary 46J15; Secondary 30H05}
\keywords{bounded analytic functions, topological stable rank, 
Bass stable rank, finitely connected domains}

\begin{abstract}
  Let $\Omega$ be a circular domain, that is, an open disk with
  finitely many closed disjoint disks removed. Denote by
  $H^\infty(\Omega)$ the Banach algebra of all bounded  holomorphic
  functions on $\Omega$, with pointwise operations and the supremum
  norm.  We show that the topological stable rank of
  $H^\infty(\Omega)$ is equal to $2$. The proof is based on Suarez's
  theorem that the topological stable rank of $H^\infty(\D)$ is equal
  to $2$, where $\D$ is the unit disk. We
  also show that for domains symmetric to the real axis, the Bass and
  topological stable ranks of the real symmetric algebra
  $\H_\R(\Omega)$ are 2.
\end{abstract}

\maketitle

\centerline {\small\the\day.\the \month.\the\year} \medskip

\section{Introduction}

The aim of this short note is to prove that the topological stable
rank of the Banach algebra $H^\infty(\Omega)$ of all bounded analytic
functions on $\Omega$ is equal to $2$, where $\Omega$ denotes a
circular domain. By conformal equivalence, the same assertion will
hold for any finitely connected, proper domain in $\C$ whose boundary does not
contain any one-point components.  We shall also show that for circular domains $\Omega$
that are symmetric to the real axis, the real algebra
$$
\H_\R(\Omega)=\{f\in H^\infty(\Omega)\; : \; (f(z^*))^*= f(z) \;(z\in \Omega)\}
$$
has the Bass and topological stable rank $2$. Here $z^*$ denotes the
complex conjugate of $z$. The precise definitions are given below.

The notion of the topological stable rank of a Banach algebra was
introduced by M. Rieffel in \cite{Rie}, in analogy with the notion of
the (Bass) stable rank of a ring defined by H. Bass \cite{Bas64}. We
recall these definitions now.

\begin{definition}
  Let $R$ be a commutative ring with identity element $1$.  An
  $n$-tuple $a:=(a_1,\dots, a_n)\in R^n$ is said to be {\sl
    invertible} or {\sl unimodular}, (for short $a\in U_n(R)$), if
  there exists a solution $(x_1,\dots, x_n)\in R^n$ of the Bezout
  equation $\sum_{j=1}^n a_jx_j=1$. We say that $a=(a_{1},\dots,
  a_{n},a_{n+1})\in U_{n+1}(R)$ is {\em reducible} if there exist
  $h_{1}, \dots, h_{n} \in R$ such that $ (a_{1}+h_{1}a_{n+1}, \dots,
  a_{n}+h_{n}a_{n+1})\in U_{n}(R)$.

The {\em Bass stable rank of} $R$ (denoted by $\text{\em bsr}\; R$)
 is the least $n\in \mN$ such that
every element $a=(a_{1},\dots, a_{n},a_{n+1}) \in U_{n+1}(R)$ is
reducible, and it is infinite if no such integer $n$ exists.

Let $A$ be a commutative Banach algebra with unit element $1$. The
least integer $n$ for which $U_n(A)$ is dense in $A^n$ is called the
{\sl topological stable rank} of $A$ (denoted by $\text{\em tsr}\; A$)
and we define $\textrm{\em tsr} \;A= \infty$ if no such integer $n$
exists.  

It is well known that $\text{bsr}\, A\leq \text{tsr}\, A$; see
\cite[Corollary 2.4]{Rie}.
\end{definition}

In the case of the classical algebra $H^\infty(\D)$ of the unit disk $\D=\{z\in\C: |z|<1\}$, 
D. Suarez \cite{su} showed that the topological stable rank is $2$. We
will use this result in order to derive our result for
$H^\infty(\Omega)$ when $\Omega$ is a circular domain.

\begin{theorem}[Suarez \cite{su}]
\label{thm_su}
The topological stable rank of $H^\infty(\D)$ is $2$. 
\end{theorem}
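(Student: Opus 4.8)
The lower bound is elementary: I would show $\operatorname{tsr}\H\ge 2$ by checking that the single coordinate function $z$ lies outside the closure of $U_1(\H)$. Since the invertible elements of $\H$ are exactly the functions bounded away from $0$, hence zero-free on $\D$, any $f\in U_1(\H)$ with $\|f-z\|_\infty<\tfrac12$ would, by Rouch\'e's theorem applied on a circle $|z|=r$ with $\tfrac12<r<1$, have exactly one zero in $\D$ — a contradiction. Thus $U_1(\H)$ is not dense in $\H$ and $\operatorname{tsr}\H\ge 2$.

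The whole difficulty is the upper bound $\operatorname{tsr}\H\le 2$, i.e. the density of $U_2(\H)$ in $\H^2$. By Carleson's corona theorem, a pair $(f,g)\in\H^2$ is unimodular precisely when $\inf_{z\in\D}\big(|f(z)|+|g(z)|\big)>0$, so the goal becomes: given $(f,g)$ and $\e>0$, produce $(f_1,g_1)$ with $\|f-f_1\|_\infty,\|g-g_1\|_\infty<\e$ and $\inf_{\D}(|f_1|+|g_1|)>0$. The plan is to perturb a single coordinate. First I would normalize $f$ via its inner–outer factorization $f=BSF$ and approximate it by a function $f_1$ whose zero set in $\D$ is an interpolating, indeed thin, sequence $\{z_k\}$; the outer and singular factors produce no zeros in $\D$ but must still be tracked in the maximal ideal space. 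On the sequence $\{z_k\}$ one can then use interpolation to replace $g$ by $g_1$ with $g_1(z_k)$ bounded uniformly away from $0$ and $\|g-g_1\|_\infty<\e$, so that $|f_1|+|g_1|$ is bounded below at every candidate common zero in $\D$.

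The crux — and the step I expect to be the main obstacle — is upgrading this to a genuine lower bound $\inf_{\D}(|f_1|+|g_1|)>0$, equivalently ruling out points $x$ of the maximal ideal space $M(\H)$ with $f_1(x)=g_1(x)=0$. Controlling the zeros in $\D$ does not suffice: one must understand the fibers $M_\lambda$ of $M(\H)$ over boundary points $\lambda\in\T$, where the pair can degenerate even though it is bounded below on $\D\cap Z(f_1)$. This is exactly where Suarez's analysis of Gleason parts enters — the distinction between the trivial part of the corona (singleton Gleason parts), on which the required estimates follow from density and approximation arguments, and the nontrivial part, handled through the analytic-disk structure of the nontrivial Gleason parts together with delicate $\overline\partial$-type estimates. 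Assembling these local perturbations into a single global pair $(f_1,g_1)$, with uniform control of the constants, is the heart of Suarez's argument and is what makes $\operatorname{tsr}\H=2$ substantially deeper than the corresponding statement for the disk algebra.
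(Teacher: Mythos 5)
The paper does not prove this statement at all: it is quoted from Suarez \cite{su} and used as a black box (together with the Riemann mapping theorem) in the proof of Theorem \ref{main_thm}. So there is no internal proof to compare yours against; the only question is whether your proposal constitutes an independent proof of Suarez's theorem.

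Your lower bound is correct and complete: invertible elements of $\H$ are bounded away from zero, hence zero-free, and Rouch\'e's theorem on $|z|=r$ with $\tfrac12<r<1$ shows that any $f$ with $\|f-z\|_\infty<\tfrac12$ has a zero in $\D$; hence $z\notin \overline{U_1(\H)}$ and $\operatorname{tsr}\H\geq 2$. The upper bound, however --- which is the entire content of the theorem --- is not proved in your proposal; it is only a roadmap. The steps you describe (approximate $f$ by an $f_1$ whose zero set is a nice interpolating/thin sequence, perturb $g$ by interpolation so that $|g_1|$ is bounded below on that sequence, then rule out common zeros in the fibers of $M(\H)$ over $\T$ via the structure of Gleason parts) do follow the skeleton of Suarez's argument, but each step requires substantial work that you explicitly defer: the approximation of a general inner--outer factorization by one whose zero structure is tractable, the control of $|f_1|+|g_1|$ on all of $M(\H)$ rather than only on $Z(f_1)\cap\D$, and the dichotomy between trivial and nontrivial Gleason parts with the attendant estimates. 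Since you name these as ``the heart of Suarez's argument'' without supplying them, the proposal, read as a proof, has a genuine gap: the density of $U_2(\H)$ in $\H\times\H$ is asserted and motivated, not established. Given that the paper itself simply cites \cite{su}, this is perhaps unavoidable in a blind attempt, but it should be said plainly that what you have is a correct proof of the elementary half plus an accurate outline of the deep half.
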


Let us recall that previously Tolokonnikov \cite{to} showed that
the Bass stable rank of $H^\infty(\Omega)$ is $1$. That was based on
S. Treil's \cite{tr} fundamental result that $\H(\D)$ has the Bass
stable rank $1$.

In \cite{mw} Mortini and Wick showed that the Bass and topological stable ranks
of the real symmetric algebra
$$
\H_\R(\D)=\{f\in H^\infty(\D): \;(f(z^*))^*= f(z)\; (z\in \D)\}
$$
are $2$.  Using this
we will show that $\D$ can be replaced by an arbitrary circular domain
symmetric to the real axis.

We now give the precise definition of a circular domain, and also fix
some convenient notation.

\medskip 

\noindent {\bf \em Notation.} Let $\Omega$ be a {\em circular domain},
of connectivity $n$, that is, an open disk, $D$, with $n-1$ closed
disjoint disks removed \footnote{We tacitly assume that the closures
  of the removed disks are contained within $D$.}. Then $\Omega$ is the
intersection of $n$ simply connected domains, $ \Omega=\Omega_0 \cap
\Omega_1 \cap \cdots \cap \Omega_{n-1}$, where $\Omega_i=\overline{\C}
\setminus \overline{D_i}$, the $D_i$ being open disks in the
extended complex plane $\overline{\C}=\C \cup \{\infty\}$.  We assume
that $\infty\in D_0$.  The boundary of a set $\Omega \subset \mC$ is
denoted by $\partial \Omega$.  

Let $H(\Omega)$ denote the set of all holomorphic functions on
$\Omega$, and let $H^\infty(\Omega)$ be the Banach algebra of all
bounded  holomorphic functions on $\Omega$, with pointwise
operations and the supremum norm.

If $\Omega$ is real symmetric (that is, $z \in \Omega$ if and only if
$z^{*} \in \Omega$), then we use the symbol $\H_{\R}(\Omega)$ to
denote the set of functions $f$ belonging to $\H(\Omega)$ that are
{\em real symmetric}, that is, $f(z)=(f(z^{*}))^{*}$ ($z\in \Omega$).

\medskip 

An example of a circular domain is the annulus $\mathbb{A}=\{z\in
\C:\;r_1<|z|<r_2\}$, where $0<r_1<r_2$. In this case
$\mathbb{A}= \Omega_0 \cap \Omega_1$, where 
\begin{eqnarray*}
  \Omega_0&:=& \{ z\in \C:\; |z|<r_2\},\\
  \Omega_1&:=& \{ z\in \overline{\C} :\; |z|>r_1\}.
\end{eqnarray*}
Thus $\Omega_0=\overline{\C} \setminus\overline{D_0}$ and
$\Omega_1=\overline{\C} \setminus\overline{D_1}$, where
\begin{eqnarray*}
  D_0&:=& \{ z\in \overline{\C}:\; |z|> r_2\},\\
  D_1&:=& \{ z\in {\C}: \; |z|< r_1\}.
\end{eqnarray*}

Our main results are the following:

\begin{theorem}
\label{main_thm}
Let $\Omega$ be a circular domain. The topological stable rank of
$H^\infty(\Omega)$ is $2$.
\end{theorem}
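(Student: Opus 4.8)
The plan is to reduce the problem to Suarez's theorem (Theorem~\ref{thm_su}) via a localization or restriction argument, exploiting the fact that $\Omega$ is the intersection of the simply connected domains $\Omega_0,\dots,\Omega_{n-1}$, each conformally equivalent to $\D$. Concretely, I would take an arbitrary pair $(f,g)\in H^\infty(\Omega)^2$ and an arbitrary $\e>0$, and aim to produce an invertible (unimodular) pair $(\tilde f,\tilde g)\in U_2(H^\infty(\Omega))$ with $\|f-\tilde f\|_\infty<\e$ and $\|g-\tilde g\|_\infty<\e$. Since $\mathrm{tsr}$ is the least $n$ for which $U_n$ is dense, and since $\mathrm{bsr}\,H^\infty(\Omega)=1$ already forces $\mathrm{tsr}\geq 2$ (a one-generated unimodular pair would make the algebra a field-like object, which fails because the Bass stable rank computation of Tolokonnikov shows nontrivial behaviour, and more directly because $H^\infty(\Omega)$ contains nonconstant functions vanishing at interior points so $U_1$ is not dense), the whole content is the upper bound $\mathrm{tsr}\leq 2$, i.e.\ this density statement.

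First I would set up a Cauchy-type decomposition: any $h\in H^\infty(\Omega)$ can be written, up to small perturbation, as a sum $h=\sum_{i=0}^{n-1} h_i$ where each $h_i$ is holomorphic and bounded on the simply connected domain $\Omega_i\supset\Omega$. This is the analogue, for $H^\infty$, of the Cauchy decomposition for a multiply connected domain; the subtlety is that the pieces must lie in $H^\infty(\Omega_i)$ (not merely $H(\Omega_i)$), which typically requires smoothing the boundary contours slightly and controlling the sup-norms. Using conformal maps $\varphi_i:\Omega_i\to\D$, each $H^\infty(\Omega_i)$ is isometrically isomorphic to $H^\infty(\D)$, so Suarez's theorem applies piecewise. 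The idea is then to approximate the pair $(f,g)$ by unimodular pairs on each piece and glue the approximations together, controlling the error additively across the $n$ pieces by replacing $\e$ with $\e/n$.

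The main obstacle, and the step I expect to carry the real weight, is the \textbf{gluing}: invertibility (unimodularity) of $(\tilde f,\tilde g)$ on $\Omega$ is not inherited automatically from approximate invertibility on each $\Omega_i$, because a common zero could appear in the overlap region $\Omega=\bigcap_i\Omega_i$. I would handle this by an inductive argument on the connectivity $n$. For the base case $n=1$, $\Omega$ is simply connected and the result is exactly Suarez's theorem transported by a conformal map. For the inductive step, I would peel off one hole: write $\Omega=\Omega'\cap\Omega_{n-1}$ where $\Omega'$ is a circular domain of connectivity $n-1$, use the inductive hypothesis on $\Omega'$ to approximate well on that larger domain, and then correct the approximation near the boundary of the $(n-1)$st disk using Suarez-type density on $\Omega_{n-1}$. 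Keeping the corrections from destroying the unimodularity obtained at the previous stage is where one needs a quantitative version: a unimodular pair with Bezout solution of bounded norm stays unimodular under sufficiently small sup-norm perturbations, so at each stage the allowed perturbation size is dictated by the norm of the Bezout coefficients produced at the previous stage.

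Alternatively, and perhaps more cleanly, I would try a direct argument avoiding explicit decomposition: given $(f,g)$ and $\e$, first perturb $g$ to a function $\tilde g$ whose zero set in $\Omega$ is a nice interpolating sequence and such that $\tilde g$ extends to be invertible across each hole (this uses that one can prescribe good boundary behaviour on a circular domain), and then perturb $f$ to $\tilde f$ so that $\tilde f$ is nonzero on the finitely many common potential trouble spots, invoking a corona-type / Bezout solvability on $H^\infty(\Omega)$ to conclude $(\tilde f,\tilde g)\in U_2$. The genuinely hard analytic input in either route is the same: converting the Euclidean-domain density into the presence of the bounded holomorphic Bezout solution on $\Omega$, which is why I would lean heavily on Suarez's theorem as a black box on each simply connected piece and treat the finite connectivity purely combinatorially through induction and error budgeting.
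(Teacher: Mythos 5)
Your top-level strategy (Suarez on each simply connected piece, a decomposition of $f$ and $g$ into pieces living on the $\Omega_i$, then the corona theorem on $\Omega$ to certify unimodularity) is the same as the paper's, but there is a genuine gap at exactly the step you identify as carrying the real weight: the gluing. Your decomposition is \emph{additive} ($h=\sum_i h_i$ with $h_i\in H^\infty(\Omega_i)$), and unimodularity is completely uncontrollable under sums: if $(\tilde f_i,\tilde g_i)$ is unimodular in $H^\infty(\Omega_i)$ for each $i$, the sums $\sum_i \tilde f_i$ and $\sum_i \tilde g_i$ can acquire common zeros anywhere in $\Omega$, and nothing in your induction or error budgeting rules this out. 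Your quantitative remark (a unimodular pair with bounded Bezout solution survives small perturbations) solves the wrong problem: it \emph{preserves} invertibility one already has, but the difficulty here is to \emph{create} joint invertibility in the overlap of the pieces. The inductive reformulation does not escape this either, since to invoke the inductive hypothesis on $\Omega'$ you must first decompose $f,g\in H^\infty(\Omega)$ into pieces on $\Omega'$ and $\Omega_{n-1}$, after which you face the identical gluing problem one level down. Your alternative route rests on a false premise: a generic $g\in H^\infty(\Omega)$ cannot be perturbed in sup-norm so as to extend holomorphically (let alone invertibly) across the holes, and without some structural input the common "trouble spots" of two arbitrary elements of $H^\infty(\Omega)$ form a subset of the maximal ideal space that need not be finite, so "perturb $f$ at finitely many spots" is unjustified.

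What the paper does instead, and what your proposal is missing, is two things. First, the decomposition is \emph{multiplicative}, not additive: Proposition \ref{to_fact} (a Tolokonnikov-type factorization built from generalized Blaschke products, with the additive Cauchy decomposition of Lemma \ref{cauchy} applied only to the logarithm of the zero-free factor) gives $f=f_0f_1\cdots f_{n-1}\,r$ with $f_j\in H^\infty(\Omega_j)$ and $r$ rational and invertible in $H^\infty(\Omega)$, and similarly for $g$. With products the zero set is the union of the factors' zero sets, so after Suarez is applied on each $\Omega_j$ the only possible failure of unimodularity of the glued pair is a common zero (or common zero fiber) of some cross pair $(F_k,G_j)$, $k\neq j$, in $\Omega_k\cap\Omega_j$. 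Second, the paper's Lemmas \ref{lemma_1} and \ref{lemma_2} supply precisely the mechanism your proposal lacks: because $F_k$ is holomorphic across $\partial D_j$ and $G_j$ is holomorphic across $\partial D_k$, their zero sets can meet in at most finitely many fibers of $M(H^\infty(\Omega_k\cap\Omega_j))$, so finitely many arbitrarily small perturbations of zeros make every cross pair unimodular on the intersection; the corona theorem then finishes the proof. This finiteness is special to the factored situation and is the real reason the gluing can be done at all. (A minor point: your claim that $\mathrm{bsr}\,H^\infty(\Omega)=1$ "forces" $\mathrm{tsr}\geq 2$ is backwards, since $\mathrm{bsr}\leq\mathrm{tsr}$; your second argument, that interior zeros persist under small perturbations so $U_1$ is not dense, is the correct one.)
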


\begin{theorem} 
\label{sym} 
Let $\Omega$ be circular domain symmetric to the real axis. Then the
topological and Bass stable rank of $\H_\R(\Omega)$ is $2$.
\end{theorem}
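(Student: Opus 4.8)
The plan is to handle the two lower bounds directly and the two upper bounds by transporting the proof of Theorem \ref{main_thm} into the real symmetric category, replacing Suarez's theorem by the Mortini--Wick result $\mathrm{tsr}\,\H_\R(\D)=\mathrm{bsr}\,\H_\R(\D)=2$ wherever the disk enters. I would begin with the lower bounds, which are the genuinely new feature here since $\mathrm{bsr}\,H^\infty(\Omega)=1$ by Tolokonnikov. Because $\Omega$ is real symmetric and connected, $\Omega\cap\R$ is a nonempty union of open intervals; fix one such interval $I$ and points $a<c<b$ in $I$. For $\mathrm{tsr}\ge 2$, note that $z-a\in\H_\R(\Omega)$ vanishes at $a\in\Omega$, while every invertible element of $\H_\R(\Omega)$ is zero-free; by Hurwitz's theorem a uniform (hence locally uniform) limit of zero-free holomorphic functions is zero-free or identically $0$, so $z-a$ is not approximable by units and $U_1(\H_\R(\Omega))$ is not dense. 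For $\mathrm{bsr}\ge 2$, I would show that $(z-c,\,(z-a)(z-b))$ is unimodular but not reducible in $\H_\R(\Omega)$. Unimodularity holds because the two polynomials have no common zero on $\overline{\Omega}$, so the corona condition is satisfied, and any Bezout solution $f_1x+f_2y=1$ can be averaged with its conjugate via $\widetilde g(z):=(g(z^*))^*$ (which is multiplicative and fixes $f_1,f_2$) to become real symmetric. Non-reducibility is the crux: if $h\in\H_\R(\Omega)$ made $g:=(z-c)+h\,(z-a)(z-b)$ invertible in $\H_\R(\Omega)$, then $g$ would be zero-free and real valued on $I$, hence of constant sign there, yet $g(a)=a-c<0$ and $g(b)=b-c>0$, a contradiction.

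For the upper bound $\mathrm{tsr}\,\H_\R(\Omega)\le 2$ I would re-run the proof of Theorem \ref{main_thm}, keeping track throughout of the conjugation involution $g\mapsto\widetilde g$. The point is that one cannot simply take a complex unimodular approximant $(F,G)$ of a real symmetric pair $(f,g)$ furnished by Theorem \ref{main_thm} and replace it by the real symmetric average $\bigl(\tfrac12(F+\widetilde F),\,\tfrac12(G+\widetilde G)\bigr)$: this average is again close to $(f,g)$, but its corona bound is uncontrolled, so it need not be unimodular. Instead, every choice made in the reduction of $\Omega$ to its simply connected pieces $\Omega_i$ must be made equivariantly. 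For a hole $D_i$ centered on the real axis the conformal map $\Omega_i\to\D$ can be taken real on $\R$, so that it intertwines the two involutions and $\H_\R(\Omega_i)\cong\H_\R(\D)$; there I invoke Mortini--Wick. The holes off the real axis occur in conjugate pairs $D_j,D_j^{*}$, which the involution interchanges; on such a pair the symmetry ties the data on $\Omega_j$ to the data on $\Omega_{j^{*}}$ rather than constraining a single piece, so the complex statement $\mathrm{tsr}\,H^\infty(\D)=2$ (Suarez, Theorem \ref{thm_su}) suffices there, applied to one piece and transported by conjugation to its partner. Carrying the construction out equivariantly should produce a unimodular approximant fixed by $g\mapsto\widetilde g$, that is, lying in $\H_\R(\Omega)^2$.

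The main obstacle is precisely this equivariance: I must verify that the reduction underlying Theorem \ref{main_thm}---the conformal transfer to the pieces, the auxiliary Bezout data, and the patching---can be performed compatibly with conjugation and with the conjugate pairing of the non-symmetric holes, so that no step is forced into a choice that breaks real symmetry. Once $\mathrm{tsr}\,\H_\R(\Omega)\le 2$ is established, the inequality $\mathrm{bsr}\le\mathrm{tsr}$ recalled above gives $\mathrm{bsr}\,\H_\R(\Omega)\le 2$, and combining this with the lower bounds of the first step yields $\mathrm{tsr}\,\H_\R(\Omega)=\mathrm{bsr}\,\H_\R(\Omega)=2$.
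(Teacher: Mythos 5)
Your proposal is correct, and on the upper bound it takes essentially the paper's route: the paper's entire argument for $\mathrm{tsr}\,\H_\R(\Omega)\le 2$ is the remark that ``the same proof'' as for Theorem \ref{main_thm} works, using the real-symmetric case of Proposition \ref{to_fact}, the symmetric analogues of Lemmas \ref{lemma_1} and \ref{lemma_2} (which the paper merely asserts), the symmetric corona theorem obtained by averaging Bezout solutions, and the Mortini--Wick theorem $\mathrm{tsr}\,\H_\R(\D)=2$ in place of Suarez's theorem on the pieces fixed by conjugation. Your equivariant reading --- real conformal maps for holes centered on $\R$, and for conjugate pairs of holes an application of Suarez on one piece transported by $g\mapsto\widetilde g$ to its partner --- is exactly how the paper's symmetric factorization has to be interpreted, and your observation that naive averaging of a complex unimodular approximant destroys the corona bound is a genuine subtlety the paper never even mentions; your write-up leaves the equivariant patching at the same level of sketch as the paper does, so there is no gap relative to the paper's own standard of detail. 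Where you genuinely differ is the lower bound. The paper asserts, without proof and after normalizing so that $]-1,1[\,\subset\Omega$ and $-1,1\notin\Omega$, that the pair $(z,1-z^2)$ is unimodular and not reducible (this pair and its analysis come from \cite{mw}); the non-reducibility there needs a boundary-limit argument: since $h$ is bounded, $x+h(x)(1-x^2)\to\pm 1$ as $x\to\pm 1$ along $\R$, forcing a sign change. Your pair $(z-c,\,(z-a)(z-b))$ with $a<c<b$ interior points of an interval of $\Omega\cap\R$ buys something real: unimodularity is trivial (coprime real polynomials), and non-reducibility follows by evaluating $g=(z-c)+h\,(z-a)(z-b)$ at the interior points $a$ and $b$, where $g(a)=a-c<0$ and $g(b)=b-c>0$, so the intermediate value theorem applies with no normalization of $\Omega$ and no limiting argument --- a self-contained proof of $\mathrm{bsr}\,\H_\R(\Omega)\ge 2$ where the paper leans on \cite{mw}. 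Your separate Hurwitz argument for $\mathrm{tsr}\ge 2$ is correct but redundant, since $\mathrm{bsr}\le\mathrm{tsr}$ already gives it.
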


\section{Preliminaries}

The following Cauchy decomposition is well known (for $H^p(\Omega)$
functions, $1\leq p \leq \infty$) \cite[Proposition 4.1, p. 86]{fi} or
\cite[Theorem 10.12, p.181]{du}.  

\begin{lemma}
\label{cauchy}
Let $\Omega=\Inter_{j=0}^{n-1} \Omega_j$ be a circular domain of
connectivity $n$. Then any $f\in H(\Omega)$ can be decomposed as
$f=f_0+f_1+\cdots + f_{n-1}$, where $f_j\in H(\Omega_j)$. If
additionally the real part of $f$ is bounded above on $\Omega$, then
the same is true for the $f_j$.
\end{lemma}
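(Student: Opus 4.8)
The plan is to build the decomposition from the Cauchy integral and then obtain the addendum about real parts by a localization argument near the boundary circles. Write $\Gamma_j=\partial D_j$ for the bounding circle of the $j$-th disk. For existence, fix $z\in\Omega$ and integrate the Cauchy kernel over the positively oriented boundary of a slightly shrunken circular subdomain $\Omega_\e \subset\subset \Omega$ containing $z$; writing $\gamma_j$ for the $j$-th boundary circle of $\Omega_\e$, Cauchy's formula gives $f(z)=\sum_{j=0}^{n-1}\frac{1}{2\pi i}\int_{\gamma_j}\frac{f(\zeta)}{\zeta-z}\,d\zeta$. Setting $f_j(z):=\frac{1}{2\pi i}\int_{\gamma_j}\frac{f(\zeta)}{\zeta-z}\,d\zeta$, each $f_j$ is holomorphic off $\gamma_j$, and Cauchy's theorem shows its value on the $\Omega_j$-side is independent of how far we shrink; letting the shrinking parameter tend to $0$ yields $f_j\in H(\Omega_j)$ with $f=f_0+\cdots+f_{n-1}$ on $\Omega$. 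For $j\ge 1$ the integral decays as $z\to\infty$, so we may normalise $f_j(\infty)=0$; a short Liouville-type argument (patching the relations $f_j=-\sum_{i\ne j}f_i$ into a function holomorphic on $\ov{\C}$, using $\Inter_i \ov{D_i}=\emp$) then gives uniqueness of the decomposition up to additive constants.

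For the addendum, suppose $\mathrm{Re}\,f\le M$ on $\Omega$. The key observation is that each circle $\Gamma_j$ lies compactly inside every $\Omega_i$ with $i\ne j$: since $\Gamma_j\subset\ov{D_j}$ and the closed disks $\ov{D_0},\dots,\ov{D_{n-1}}$ are pairwise disjoint, $\dist(\Gamma_j,\ov{D_i})>0$ for $i\ne j$. Hence, on a closed annular collar $A_j$ of $\Gamma_j$ lying on the $\Omega$-side, every $f_i$ with $i\ne j$ is holomorphic and therefore bounded, while $A_j\subset\Omega$ once the collar is thin enough. On $A_j$ we may then write $\mathrm{Re}\,f_j=\mathrm{Re}\,f-\sum_{i\ne j}\mathrm{Re}\,f_i\le M+C_j$ for a finite constant $C_j$, so that $\mathrm{Re}\,f_j$ is bounded above on a one-sided neighbourhood of the only boundary circle of $\Omega_j$.

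It remains to bound $\mathrm{Re}\,f_j$ on the rest of $\Omega_j$, and here no maximum principle is needed: $\Omega_j$ is covered by the collar $A_j$, a compact core on which $f_j$ is holomorphic (hence $\mathrm{Re}\,f_j$ is bounded by continuity), and, for $j\ge 1$, a neighbourhood of $\infty$ on which $f_j$ is holomorphic with $f_j(\infty)=0$; for $j=0$, where $\Omega_0$ is a genuine disk, the cover by the boundary collar and a compact core suffices. On each piece $\mathrm{Re}\,f_j$ is bounded above, so it is bounded above on all of $\Omega_j$, as claimed.

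The main obstacle is precisely this addendum: the Cauchy construction controls the $f_j$ only through their sum on $\Omega$, so a global bound on $\mathrm{Re}\,f$ must be transferred to the individual summands. The device that makes this work is the disjointness of the removed disks, which guarantees that near $\Gamma_j$ all the \emph{other} summands are harmless (holomorphic and bounded), letting the hypothesis on $\mathrm{Re}\,f$ pin down the boundary behaviour of the single summand $f_j$.
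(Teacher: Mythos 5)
Your proof is correct and follows essentially the same route as the paper's: the decomposition comes from Cauchy's integral formula applied along cycles close to $\partial\Omega$ (normalising $f_j(\infty)=0$ for $j\ge 1$), and the upper bound on $\mathrm{Re}\, f_j$ is transferred from $\Omega$ to all of $\Omega_j$ by noting that the other summands are holomorphic, hence bounded, near $\ov\C\setminus\Omega_j$, with the rest of $\Omega_j$ handled by compactness and holomorphy at $\infty$. Your write-up simply makes explicit the collar and covering details that the paper's two-sentence proof leaves implicit.
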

\begin{proof}
  Apply Cauchy's integral formula for a null homologic cycle, close to
  the boundary of $\Omega$, and use the principle of analytic
  continuation. Now let us assume that the real part of $f$ is bounded
  above on $\Omega$.  Fix $k\in\{0,1,\dots, n-1\}$. Since
  $f_j(\infty)=0$ for $j=1,2,\dots, n-1$ and $\sum_{j\not=k} f_j$ is
  holomorphic in a neighborhood of the set $\ov \C\setminus \Omega_k$,
  we see that the real part of each $f_j$ is bounded above on
  $\Omega_j$, for $j=0, 1,\dots,n-1$.  
\end{proof}

We will use the following factorization result; the non-symmetric
version appears in \cite[Lemma 1]{to}. Since in our viewpoint, the
proof of the annulus-case by Tolokonnikov is not complete, we give a
more general proof, that includes also the symmetric case.

Recall that a \BP\ $B$ with zeros $(z_j)$ in the disk 
$$
D(a,r)=\{z\in \C:\; |z-a|<r\}
$$ 
has the form $B(z)= b(\frac{z-a}{r})$, where $b$ is the usual \BP\ of
the unit disk with zeros $w_j=\frac{z_j-a}{r}$.  Similarly, the \BP\
$B_e$ with zeros $(z_j)$ in the exterior of the disk $D(a,r)$ has the
form $B_e(z)= b(\frac{r}{z-a})$ where $b$ is the usual \BP\ of the
unit disk with zeros $w_j=\frac{r}{z_j-a}$. We call these functions
{\sl generalized \BP s.}

\begin{proposition}
\label{to_fact}
Let $\Omega$ be a circular domain of connectivity $n$, $n\in \N$, and
let $\overline{D_j}$ denote the bounded components of $\ov\C\setminus
\Omega$, $(j=1,\cdots, n-1)$, that is, $D_j$ is the open disk $D(a_j,
r_j)$.  Define
\begin{eqnarray*}
\Omega_j&=&\ov \C\setminus \ov D_j,\quad  j=1,\dots, n-1,\\
\Omega_0&=&\Omega\union \bigg(\bigcup_{j=1}^{n-1} D_j\bigg).
\end{eqnarray*}  
Then every function $f$ in $H^\infty(\Omega)$, $f\not\equiv 0$, can be decomposed as:
$$
f=f_0 \cdot f_1 \cdot f_2  \cdots  f_{n-1}\cdot r, 
$$
where 
$$
f_j \in H^\infty(\Omega_j) \cap \Bigg(H^\infty \Bigl(\bigcup_{k\neq
  j}D_k\Bigr)\Bigg)^{-1}, \quad j=0,1,2, \dots, n-1,
$$ 
and where $r$ is a rational function with poles and zeros contained in
the set $\{a_1,\cdots, a_{n-1}\}.$

If $\Omega$ is a domain symmetric to the real axis, and $f\in
\H_\R(\Omega)$, then each of the functions $f_j$ and $r$ above can be
taken to be real symmetric themselves.
\end{proposition}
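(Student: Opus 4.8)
The plan is to split $f$ multiplicatively into a part carrying all its zeros, built from generalized Blaschke products, and a zero-free part treated by exponentiating the \emph{additive} decomposition of Lemma~\ref{cauchy}. First I would use that the zeros $(z_k)$ of $f\in H^\infty(\Omega)$ satisfy the Blaschke condition for $\Omega$ and accumulate only on $\partial\Omega$. Assigning each $z_k$ to the nearest boundary circle splits this condition into $n$ subconditions: the zeros sent to the outer circle satisfy the Blaschke condition for the disk $\Omega_0$, and those sent to $\partial D_j$ satisfy it for the exterior disk $\Omega_j$. Forming the corresponding generalized Blaschke product $B_j$ for each group gives $B_0\in H^\infty(\Omega_0)$ and $B_j\in H^\infty(\Omega_j)$ ($j\ge 1$), each with $|B_j|\le 1$ and modulus $1$ a.e.\ on its matching circle. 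Since the zeros of $B_j$ cluster only on that one circle and the disks have pairwise disjoint closures, $B_j$ is bounded and bounded away from $0$ on every $D_k$ with $k\neq j$ (including the unbounded component), so $B_j\in\big(H^\infty(\bigcup_{k\neq j}D_k)\big)^{-1}$.

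Now put $B=B_0B_1\cdots B_{n-1}$ and $g=f/B$, holomorphic and zero-free on $\Omega$ after removing the common zeros. The first delicate point is that $g$ is in fact bounded: on the outer circle $|B_0|=1$ a.e.\ while the remaining factors are bounded below there (their zeros lying far away), and symmetrically on each inner circle $\partial D_j$ one has $|B_j|=1$ a.e.\ while the other factors are bounded below; hence $|g|=|f|/|B|$ is bounded on $\partial\Omega$ and the maximum principle yields $g\in H^\infty(\Omega)$. The second, genuinely new, feature compared with the simply connected case is that $g$ need not possess a single-valued logarithm on the multiply connected $\Omega$: it has winding numbers $m_j:=\frac{1}{2\pi i}\oint_{\partial D_j}\frac{g'}{g}\,dz\in\Z$ about the inner holes. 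Setting $r(z):=\prod_{j=1}^{n-1}(z-a_j)^{m_j}$ --- a rational function bounded above and below in modulus on $\Omega$ (as $a_j\notin\Omega$) whose only zeros and poles sit at the $a_j$ --- the quotient $g/r$ has vanishing winding number about every hole, hence a single-valued holomorphic logarithm $L=\log(g/r)$ on $\Omega$.

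Because $\mathrm{Re}\,L=\log|g/r|$ is bounded above on $\Omega$, Lemma~\ref{cauchy} applies and gives $L=h_0+h_1+\cdots+h_{n-1}$ with $h_j\in H(\Omega_j)$ and $\mathrm{Re}\,h_j$ bounded above on $\Omega_j$. Exponentiating, $g_j:=e^{h_j}$ is zero-free and lies in $H^\infty(\Omega_j)$; and since $\overline{D_k}$ is a compact subset of $\Omega_j$ for every $k\neq j$, the function $h_j$ is bounded on $\overline{D_k}$, so $e^{\pm h_j}$ is bounded there and $g_j\in\big(H^\infty(\bigcup_{k\neq j}D_k)\big)^{-1}$. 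Putting $f_j:=B_j\,g_j$ then yields $f=f_0f_1\cdots f_{n-1}\cdot r$ with each $f_j\in H^\infty(\Omega_j)\cap\big(H^\infty(\bigcup_{k\neq j}D_k)\big)^{-1}$ and $r$ the required rational function, settling the non-symmetric case.

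For the symmetric case I would run the same construction conjugation-equivariantly. The zero set of $f\in\H_\R(\Omega)$ is invariant under $z\mapsto z^*$, so the zeros can be assigned to boundary circles symmetrically and the Blaschke products chosen so that the factor attached to each self-conjugate component (the outer disk and any axis-centred hole) is itself real symmetric, while conjugate holes receive conjugate factors; the winding numbers then satisfy $m_{j'}=m_j$ for a conjugate pair $(D_j,D_{j'})$, which makes $r$ real symmetric. Choosing the branch of $L$ that is real on $\Omega\cap\R$ forces $L$ to be real symmetric, and the decomposition of Lemma~\ref{cauchy} can be taken equivariant, so that exponentiation preserves symmetry. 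The main obstacle I anticipate is the boundedness-and-single-valuedness bridge of the second paragraph --- verifying that $g=f/B$ does not blow up near the ``wrong'' boundary circles and correctly extracting the rational factor $r$ from the winding numbers --- which is precisely the interplay where the annulus argument of \cite{to} appears to be incomplete; the remaining symmetric bookkeeping is routine once the equivariant choices are fixed.
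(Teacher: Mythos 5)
Your non-symmetric argument is essentially the paper's own proof: extract the zeros with generalized Blaschke products attached to the boundary circles, remove the winding numbers around the holes with the rational factor $r(z)=\prod_{j\geq 1}(z-a_j)^{m_j}$ (the paper cites Burckel for this step; you re-derive it), apply Lemma \ref{cauchy} to the resulting logarithm, and exponentiate. Your explicit verification that each factor is invertible in $H^\infty\bigl(\bigcup_{k\neq j}D_k\bigr)$ is a detail the paper leaves implicit, and it is correct.

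The symmetric case, however, contains a genuine gap: the claim that one can choose ``the branch of $L$ that is real on $\Omega\cap\R$'' and that this forces $L$ to be real symmetric. Such a branch need not exist. Since $g/r$ is real symmetric, zero-free, and has a single-valued logarithm $L$ on $\Omega$, the function $z\mapsto (L(z^*))^*$ is another branch of that logarithm, so $L(z)-(L(z^*))^*\equiv 2k\pi i$ for some fixed $k\in\Z$; evaluating at real points gives $\mathrm{Im}\,L\equiv k\pi$ on all of $\Omega\cap\R$, i.e.\ the sign of $g/r$ on the real axis is the constant $(-1)^k$. When $k$ is odd --- equivalently, when $g/r<0$ on $\Omega\cap\R$, as happens already for $f\equiv -1$ on a symmetric annulus (there all $B_j\equiv 1$, all winding numbers vanish, $r\equiv 1$, and $g/r\equiv-1$) --- no branch of $L$ is real on the axis, and no branch is real symmetric (any branch $L+2m\pi i$ satisfies $L+2m\pi i-((L+2m\pi i)(z^*))^*=2(k+2m)\pi i\neq 0$). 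So your prescription cannot be carried out as stated. This is exactly the point where the paper inserts a sign dichotomy: after symmetrizing the Cauchy decomposition it obtains $e^{h}=\pm e^{\sum_j H_j}$, and in the minus case it replaces $r$ by $-r$, which is still a real symmetric rational function with zeros and poles among the $a_j$. Your argument becomes complete once you add this step: if $g/r<0$ on $\Omega\cap\R$, pass to $g/(-r)$, which is positive there and does admit a real symmetric logarithm, and then proceed as you describe.
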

\begin{proof}
  We may
  assume that $\Omega$ is the circular domain
  $$
  \Omega=D(a_0,r_0)\setminus \Union_{j=1}^{n-1} \ov{D(a_j, r_j)},
  $$
  where  $\ov D_j=\ov{D(a_j, r_j)} \ss D(a_0,r_0)$
  and where the closures of the $D_j$ $(j=1,\dots,n-1)$ are disjoint.
  
  Let $D_0:= D(a_0,r_0)$.  Set $\Omega_j=\ov\C\setminus\ov D_j$,
  $(j=0,1,\dots,n-1)$.  It is well known that the sequence $(z_k)$ of
  zeros of $f$ satisfies the generalized Blaschke condition; that is
  $\sum_k\text{dist}\; (z_k,\partial \Omega)$ converges (see \cite{fi,
    ru}). Split $(z_k)$ into $n$ sequences $(z_{k,j})_k$, $j=0,1,\dots
  n-1$, so that the cluster points of $(z_{k,j})_k$ are exactly those
  of $(z_k)$ that belong to $\partial D_j$,
  $j=0,1,\dots,n-1$. Let $B_j$ be the generalized \BP\ formed with the
  zeros $(z_{k,j})_k$ of $f$, $j=0,1,\dots, n-1$.  It is clear that
  the zeros of $B_j$ cluster only at $\partial D_j$, $0\leq j\leq
  n-1$.  
  
  Then $f$ can be written as $f=B_0\cdot B_1\cdots B_{n-1}\cdot g$,
  where $g\in \H(\Omega)$ and $g$ has no zeros in $\Omega$ (note that
  here we have used the fact that divison by $B_j$ does not change the
  relative supremum of $f$ on the boundary of $\Omega_j$).

  By \cite[p. 111-112]{bu}, there exist $k_j\in \Z$ and $h$
  holomorphic in $\Omega$, such that
  $$
  g(z)=\prod_{j=1}^{n-1}(z-a_j)^{k_j}e^ {h(z)}.
  $$  
  Note that the real part of $h$ is bounded above on $\Omega$.
  
  By Lemma \ref{cauchy}, there exist $h_j\in H(\Omega_j)$ such that
  $h=h_0+h_1+\cdots +h_{n-1}$ and the real part of each $h_j$ is
  bounded above on $\Omega_j$, for $j=0, 1,\cdots,n-1$. Hence the
  functions $e^{h_j}\in \H(\Omega_j)$.
 
  Now $f\!\!=\!r\!\displaystyle\prod_{j=0}^{n-1}\!\!B_j e^{h_j}$,
  where $r(z)\!\!=\!\!\displaystyle \prod_{j=1}^{n-1}\!(z-a_j)^{k_j}$
  gives the desired factorization.

  In case of a symmetric domain $\Omega$ and $f\in \H_\R(\Omega)$,
   we can choose $a_j$ to be real if the
  disk $D(a_j,r_j)$ meets the real line, and the other $a_j$ in pairs
  $(a,a^*)$. Thus we can ensure that $r$ is real symmetric, because
  the exponents $k_j$ are the same for $a_j$ and $a_j^*$ due to the
  fact that
$$
k_j = \frac{1}{2 \pi i}\oint_{\Gamma}\frac{g'(z)}{g(z)}dz \;,
$$
where $\Gamma$ denotes a suitable small circle around $a_j$.  

The Blaschke products above are easily seen to be choosable in a real
symmetric fashion.  Hence, since $f$ is real symmetric, we
conclude that $g$ is real symmetric as well. Therefore, $e^h$ is real
symmetric; that is 
$$
e^{h(z)}=(e^{h(z^*)})^*=e^{(h(z^*))^*}.
$$
Since $\Omega$ is a domain, $h(z)-(h(z^*))^*$ equals a constant $2k\pi
i$ for some $k\in\Z$. Therefore
$$
h(z)=\frac{h(z)+(h(z^*))^*}{2}+\frac{h(z)-(h(z^*))^*}{2}
=\frac{h(z)+(h(z^*))^*}{2}+k \pi i
$$
Now in Cauchy's decomposition, we simply consider the symmetric
functions $H_j(z):=\frac{h_j(z)+(h_j(z^*))^*}{2}$, and derive
$$
h(z)=\sum_{j=0}^{n-1}H_j(z)+k \pi i.
$$
Thus we have one of the following cases
$$
e^{h(z)}=e^{\sum_{j=0}^{n-1}H_j(z)} \quad(z \in \Omega)
$$
or
$$
e^{h(z)}=-e^{\sum_{j=0}^{n-1}H_j(z)} \quad(z \in \Omega).
$$
In the latter case we take $-r$ instead of $r$.  Thus all the factors
in
  $$
  f=r\displaystyle \prod_{j=0}^{n-1}B_je^{H_j}
  $$ 
  are symmetric.
\end{proof}
 
We recall that the corona theorem holds for $H^\infty(\Omega)$ when
$\Omega$ is a circular domain; see for example \cite[Theorem 6.1,
p.195]{fi}.

\begin{proposition} 
Let $\Omega$ be a circular domain. Then $(f_1,\dots,f_n)$ is 
invertible in $H^\infty(\Omega)$ if and only if there exists a 
$\delta>0$ such that
$$
\sum_{j=1}^n |f_j(z)| \geq \delta \quad (z\in \Omega).
$$
\end{proposition}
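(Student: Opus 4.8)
The plan is to prove the two implications separately. The reverse implication (invertibility forces the lower bound) is an elementary norm estimate that uses nothing about the geometry of $\Omega$, while the forward implication (the lower bound forces invertibility) is precisely the content of the corona theorem for $\Omega$ recalled immediately above; so the heart of the matter is to recognize the stated condition as the corona hypothesis and to invoke that theorem.

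First I would dispose of the easy direction. Suppose $(f_1,\dots,f_n)$ is invertible, so there are $g_1,\dots,g_n\in H^\infty(\Omega)$ with $\sum_{j=1}^n f_jg_j=1$ on $\Omega$. Putting $M:=\max_{1\le j\le n}\|g_j\|_\infty<\infty$, for every $z\in\Omega$ we have
$$
1=\Big|\sum_{j=1}^n f_j(z)g_j(z)\Big|\le M\sum_{j=1}^n |f_j(z)|,
$$
whence $\sum_{j=1}^n |f_j(z)|\ge 1/M=:\delta>0$, which is the stated lower bound.

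For the forward direction I would argue that the hypothesis $\sum_{j=1}^n|f_j|\ge\delta$ on $\Omega$ is exactly the corona condition for the tuple $(f_1,\dots,f_n)$: it says that the tuple lies in no maximal ideal of $H^\infty(\Omega)$, equivalently that $\Omega$ is dense in the maximal ideal space of $H^\infty(\Omega)$. By the corona theorem for circular domains, \cite[Theorem 6.1, p.195]{fi}, this lower bound already guarantees the existence of $g_1,\dots,g_n\in H^\infty(\Omega)$ solving $\sum_{j=1}^n f_jg_j=1$, which is the required invertibility. Concretely, the corona theorem here is obtained by lifting the data to the universal covering disk $\D$ through a covering map $\pi$, applying Carleson's corona theorem on $\D$ to the tuple $(f_j\circ\pi)$, which is bounded and still bounded below by $\delta$, and then descending the resulting Bezout solution to $\Omega$; an alternative route reduces the connectivity of $\Omega$ inductively to the simply connected case.

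The main obstacle is exactly this descent: the Bezout solution produced by Carleson's theorem on $\D$ need not be invariant under the deck transformation group of the covering, so it must be corrected — typically by averaging together with solving an auxiliary $\bar\partial$-equation carrying the correct invariance — in order to yield functions that are single-valued on $\Omega$. Since this is precisely the corona theorem for finitely connected domains, which is already established in the literature and recalled above, I would invoke it rather than reprove it; the remaining content of the proposition is then the elementary estimate carried out above together with the translation of the corona statement into the language of unimodular tuples.
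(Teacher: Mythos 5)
Your proposal is correct and matches the paper's treatment: the paper offers no proof of this proposition at all, simply recalling it as the known corona theorem for circular domains with the same citation \cite[Theorem 6.1, p.195]{fi} that you invoke. Your additions --- the elementary norm estimate for the easy direction and the sketch of the covering-map proof of the corona theorem --- are sound but go beyond what the paper itself records.
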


This corona-theorem is of course true for $\H_\R(\Omega)$.  Indeed, if
$f_j\in\H_\R(\Omega)$ and $(g_1,\dots,g_n)$ is a solution of
$\sum_{j=1}^n g_jf_j=1$ in $\H(\Omega)$, then $(\widetilde
g_1,\dots, \widetilde g_n)$ is a solution of the Bezout equation
$\sum_{j=1}^n \widetilde g_j f_j=1$ in $\H_\R(\Omega)$, where $\widetilde g_j(z):=
\frac{g(z)+(g_j(z^*))^*}{2}$ ($z \in \Omega$).  

\bigskip

We will need two technical results, which are proved below. In the
following, the notation $M(R)$ is used to denote the maximal ideal
space of the unital commutative Banach algebra $R$. Also the complex
homomorphism from $H^\infty(\Omega)$ to $\mC$ of point evaluation at a
point $z \in \Omega$ will be denoted by $\varphi_{z}$, that is,
$\varphi_{z} (f)=f(z)$, $f\in H^\infty(\Omega)$.

Let $z_0\in \overline \Omega$. The set
$$
M_{z_0}(\H(\Omega))=\{\varphi\in M(H^\infty(\Omega)): \varphi(z)=z_0\}
$$
is called the {\em fiber of $M(H^\infty(\Omega))$ over $z_0$}. It is
well known (see \cite{fi}), that we have $\varphi(f)=0$ for some 
$\varphi\in M_{z_0}(\H(\Omega))$ if
and only if $\liminf_{z\to z_0} |f(z)|=0$.  The {\em zero set of $f\in
\H(\Omega)$} is the set $\{\varphi\in M(\H(\Omega)): \varphi(f)=0\}$.

We need a Lemma that lets us decompose two functions that live on different circular domains. To this end, let $D_1, D_2$ be open disks in $\ov\C$ such that $\overline{D_1}
\cap \overline{D_2} =\emptyset$.  Next, define $\Omega_j:=\overline{\mC}\setminus\overline{D_j}$ for $j=1,2$.  Suppose that $f_j\in H^\infty(\Omega_j)$ for $j=1, 2$ are non-zero functions.  Next, set
\begin{eqnarray*}
Z_1&=&\bigg\{\xi\in \partial D_1=\partial \Omega_1:  f_2(\xi)=0~~ \text{ and }~~ 
  \liminf_{z\to \xi \atop z\in \Omega_1\inter \Omega_2}|f_1(z)|=0\bigg\},\\
Z_2&=&\bigg\{\xi\in \partial D_2=\partial \Omega_2:  f_1(\xi)=0~~ \text{ and }~~ 
  \liminf_{z\to \xi \atop z\in \Omega_1\inter \Omega_2}|f_2(z)|=0\bigg\},
\textrm{ and}\\
Z_3&=&\bigg\{a\in \Omega_1\inter \Omega_2: f_1(a)=f_2(a)=0\bigg\},
\end{eqnarray*}

\begin{lemma} 
\label{lemma_1}
Let $D_1, D_2$ be open disks in $\ov\C$ such that $\overline{D_1}
\cap \overline{D_2} =\emptyset$. Define $\Omega_j:=
\overline{\mC}\setminus \overline{D_j}$.  Let $f_j \in
H^\infty(\Omega_j)$ be nonzero functions.
Then the zero sets of $f_1$ and $f_2$ meet in at most a finite number of
fibers of $\H(\Omega_1\inter \Omega_2)$. In other words, there exist
at most finitely many $z_j\in\ov{\Omega_1\inter \Omega_2}$ for which
$$
\liminf_{z\to z_j} |f_1(z)|=\liminf _{z\to z_j}|f_2(z)|=0.
$$
Moreover,   
$f_1$ and $f_2$ can be written as 
\begin{eqnarray*}
f_1&=&\prod_{z_j\in Z_2\union Z_3}  (z-z_j)^{m_j} \widetilde F_1, \textrm{ and}\\
f_2&=&\prod_{z'_j\in Z_1\union Z_3}  (z-z'_j)^{m_j'} \widetilde F_2
\end{eqnarray*}
where $\widetilde F_j$ is in $\H(\Omega_1\inter
\Omega_2)$ and has the property that for any element $\varphi \in
M(\H(\Omega_1\inter \Omega_2))$ either $\varphi(\widetilde F_1)\not=0$
or $\varphi(\widetilde F_2)\not=0$.

Additionally, when $\lambda \in\ov{\Omega_1\inter \Omega_2}$, each
$\varphi\in M_\lambda( \H(\Omega_1\inter \Omega_2))$ is such that
$\varphi=\varphi_{\lambda} \in
M(H^\infty(\Omega_1))$ whenever $\lambda \in \Omega_2$, or 
 $\varphi=\varphi_{\lambda} \in M(H^\infty(\Omega_2))$
whenever $\lambda \in \Omega_1$.

\end{lemma}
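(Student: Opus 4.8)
The plan is to decode the three sets $Z_1,Z_2,Z_3$ through the fibre characterization recalled just before the lemma: the zero sets of $f_1$ and $f_2$ meet in the fibre over a point $z_0\in\overline{\Omega_1\cap\Omega_2}$ exactly when $\liminf_{z\to z_0}|f_1(z)|=\liminf_{z\to z_0}|f_2(z)|=0$. I would split $\overline{\Omega_1\cap\Omega_2}=(\Omega_1\cap\Omega_2)\cup\partial D_1\cup\partial D_2$ and treat each piece. For interior $z_0$ both liminfs are attained values, so $z_0$ is a common zero, i.e. $z_0\in Z_3$. For $z_0\in\partial D_1$ the function $f_2$ is holomorphic (since $\partial D_1\subseteq\Omega_2$), so $\liminf_{z\to z_0}|f_2|=|f_2(z_0)|$ and the meeting condition becomes $f_2(z_0)=0$ together with $\liminf_{z\to z_0}|f_1|=0$, that is $z_0\in Z_1$; the circle $\partial D_2$ gives $Z_2$ symmetrically. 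Finiteness then follows from the identity theorem: a limit point of $Z_3$ would be a common accumulation point of the zeros of $f_1$ and of $f_2$, but since $f_j\not\equiv 0$ the zeros of $f_1$ accumulate only on $\partial D_1$ and those of $f_2$ only on $\partial D_2$, and these disjoint circles cannot share a point; likewise $Z_1$ is contained in the zero set of $f_2$ on the compact circle $\partial D_1\subseteq\Omega_2$, which is finite because $f_2\not\equiv 0$. Hence $Z_1,Z_2,Z_3$ are all finite, which is the first assertion.

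For the factorization I would let $m_j$ (resp. $m'_j$) be the exact order of vanishing of $f_1$ at the points of $Z_2\cup Z_3$ (resp. of $f_2$ at the points of $Z_1\cup Z_3$) and define $\widetilde F_1=f_1\big/\prod_{z_j\in Z_2\cup Z_3}(z-z_j)^{m_j}$ and $\widetilde F_2=f_2\big/\prod_{z'_j\in Z_1\cup Z_3}(z-z'_j)^{m'_j}$. Every removed point lies in $\Omega_1$ (for $f_1$) or in $\Omega_2$ (for $f_2$), where the respective function is holomorphic, so each quotient has only removable singularities and stays bounded on $\Omega_1\cap\Omega_2$ -- division by a polynomial only improves the behaviour at $\infty$ should $\infty$ lie in the region. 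Thus $\widetilde F_1,\widetilde F_2\in H^\infty(\Omega_1\cap\Omega_2)$, and by construction neither vanishes at the points that were divided out.

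The core of the proof is the corona property of $(\widetilde F_1,\widetilde F_2)$, which I would verify fibre by fibre over $\overline{\Omega_1\cap\Omega_2}$, converting statements about values and liminfs into non-vanishing on a whole fibre via the liminf characterization. Over an interior point the fibre is a single evaluation, and $\widetilde F_1,\widetilde F_2$ have no common zero there: a common interior zero of them would be a common zero of $f_1$ and $f_2$, i.e. a point of $Z_3$, which was divided out of both factors. Over $\lambda\in\partial D_1$ I would separate three cases. If $f_2(\lambda)\neq 0$ then $\widetilde F_2$ is holomorphic and nonzero at $\lambda$, whence $\liminf_{z\to\lambda}|\widetilde F_2|>0$ and $\varphi(\widetilde F_2)\neq 0$ on the whole fibre. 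If $f_2(\lambda)=0$ and $\lambda\in Z_1$, the factor $(z-\lambda)$ was removed from $f_2$, so again $\widetilde F_2(\lambda)\neq 0$. Finally, if $f_2(\lambda)=0$ but $\lambda\notin Z_1$, then by the definition of $Z_1$ we have $\liminf_{z\to\lambda}|f_1|>0$, and since no factor of $\widetilde F_1$ vanishes at the point $\lambda\in\partial D_1$, also $\liminf_{z\to\lambda}|\widetilde F_1|>0$, so $\varphi(\widetilde F_1)\neq 0$ throughout the fibre. The circle $\partial D_2$ is symmetric. I expect this case analysis -- matching which factor was divided out of which function against the three defining conditions of $Z_1,Z_2,Z_3$ -- to be the main obstacle, since it is where all the definitions must fit together exactly.

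For the additional statement I would first normalize by a M\"obius transformation sending an interior point of $D_1$ to $\infty$; this maps disks to disks, preserves disjointness of closures, and is an isometric isomorphism of the corresponding $H^\infty$-algebras, so it preserves fibres and point evaluations. After this reduction $\overline{\Omega_1\cap\Omega_2}$ is bounded, the coordinate function $z$ belongs to $H^\infty(\Omega_1\cap\Omega_2)$, and $\varphi(z)=\lambda$ for $\varphi\in M_\lambda$. If $\lambda\in\Omega_2$ then every $g\in H^\infty(\Omega_2)$ is holomorphic at $\lambda$, and writing $g-g(\lambda)=(z-\lambda)h$ with $h=(g-g(\lambda))/(z-\lambda)\in H^\infty(\Omega_1\cap\Omega_2)$ (removable singularity at $\lambda$, bounded on the bounded region) yields $\varphi(g)-g(\lambda)=\varphi(z-\lambda)\,\varphi(h)=0$. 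Thus $\varphi$ acts on $H^\infty(\Omega_2)$ as evaluation at $\lambda$, so all of its nontrivial content is carried by its restriction to $H^\infty(\Omega_1)$, giving the claimed identification $\varphi=\varphi_\lambda\in M(H^\infty(\Omega_1))$; the case $\lambda\in\Omega_1$ is symmetric. The very same division argument also supplies the standard fact used in the corona step, namely that the fibre over an interior point consists of a single evaluation.
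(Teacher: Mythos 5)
Your proof is correct, and where it overlaps with the paper's own proof it follows the same route: you obtain the finiteness of $Z_1,Z_2,Z_3$ exactly as the paper does, from the identity theorem (zeros of $f_1$ can accumulate only on $\partial D_1$, those of $f_2$ only on $\partial D_2$, and these circles are disjoint; $Z_1$ lies in the zero set of $f_2$ on the compact circle $\partial D_1\subset\Omega_2$, hence is finite). The real difference is one of completeness: the paper's proof consists only of this finiteness argument plus the one-line remark that every $\varphi\in M(H^\infty(\Omega_1\cap\Omega_2))$ restricts to elements of $M(H^\infty(\Omega_1))$ and $M(H^\infty(\Omega_2))$; the factorization, the corona-type property of $(\widetilde F_1,\widetilde F_2)$, and the point-evaluation statement are all left implicit. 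You actually prove them: dividing out the exact orders of vanishing at the points of $Z_2\cup Z_3$ and $Z_1\cup Z_3$, running the fibre-by-fibre case analysis against the definitions of the three sets via the $\liminf$ characterization of zeros in fibres, and deriving the evaluation property from a M\"obius normalization together with the Gelfand division trick $g-g(\lambda)=(z-\lambda)h$. These additions are sound, and the normalization also settles a point the paper never addresses, namely that the fibres over $\overline{\Omega_1\cap\Omega_2}$ really exhaust $M(H^\infty(\Omega_1\cap\Omega_2))$ when $\infty\in\Omega_1\cap\Omega_2$. The only place a reader might ask for one more line is your final "identification" of $\varphi$ with an element of $M(H^\infty(\Omega_1))$: to see that $\varphi$ is genuinely determined by its restriction to $H^\infty(\Omega_1)$ (given that it acts as $\varphi_\lambda$ on $H^\infty(\Omega_2)$), invoke the paper's Cauchy decomposition (Lemma \ref{cauchy}) to write any $f\in H^\infty(\Omega_1\cap\Omega_2)$ as $f_1+f_2$ with $f_j\in H^\infty(\Omega_j)$, whence $\varphi(f)=\varphi(f_1)+f_2(\lambda)$.
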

\begin{proof} It is clear that if $\varphi \in M(H^\infty(\Omega_1
  \cap \Omega_2))$, then $\varphi \in M(H^\infty (\Omega_1))$ and
  $\varphi \in M(H^\infty(\Omega_2))$.

  Now the set $ Z_3=\{z\in \Omega_1 \cap \Omega_2\;|\; f_1(z)=f_2(z)=0\}$
  is finite, for otherwise, there is an accumulation point of zeros in
  $\partial \Omega_1$ or in $\partial \Omega_2$. But $\partial
  \Omega_1$ is contained in $\Omega_2$, and $\partial \Omega_2$ is
  contained in $\Omega_1$. So either $f_1$ or $f_2$ is identically $0$, a
  contradiction.

  Consider the set $Z_2$ and let $\lambda\in Z_2$. 
 There are only finitely many zeros of $f_1$ on the circle $\partial D_2
\subset \Omega_1$, since  $f_1$ is not identically zero.
 Similarly, we can can argue in the case when $\lambda \in Z_1$.
 Thus, $Z_1$ is finite as well. This completes the proof.
\end{proof}

It is clear that an analogous version holds true for the symmetric
case.

\begin{lemma}
\label{lemma_2}
Let $D_1, D_2$ be open disks in $\overline{\C}$ such that
$\overline{D_1} \cap \overline{D_2} =\emptyset$. Define $\Omega_1:=
\overline{\mC}\setminus \overline{D_1}$ and $\Omega_2:=
\overline{\mC} \setminus \overline{D_2}$.  Let $f_1,g_1 \in
H^\infty(\Omega_1)$ and $f_2,g_2\in H^\infty(\Omega_2)$ be 
nonconstant functions such that there exists $\delta >0$ such that the
following hold:
\begin{itemize}
\item[(P1)] For all $z\in \Omega_1$, $|f_1(z)|+|g_1(z)|\geq \delta$.
\item[(P2)] For all $z\in \Omega_2$, $|f_2(z)|+|g_2(z)|\geq \delta$.
\end{itemize}
Then, for every $\e>0$,  there exist $F_1, G_1\in \H(\Omega_1), F_2,G_2\in \H(\Omega_2)$ such that
\begin{itemize}
\item[(C1)] $(F_1,G_2)$ is invertible in $\H(\Omega_1\inter \Omega_2)$,
\item[(C2)] $(G_1,F_2)$ is invertible in $\H(\Omega_1\inter \Omega_2)$
\item[(C3)] $(F_1,G_1)$ is invertible in $\H(\Omega_1)$,
\item[(C4)] $(F_2,G_2)$ is invertible in $\H(\Omega_2)$, and 
\item[(C5)] $||f_1-F_1||+||g_1-G_1||+||f_2-F_2||+||g_2-G_2||<\e$.
\end{itemize}
In particular, $(F_1F_2, G_1G_2)$ is invertible  in $\H(\Omega_1\inter \Omega_2)$.
\end{lemma}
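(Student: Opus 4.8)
The plan is to produce $F_1,G_1,F_2,G_2$ by perturbing each of the four given functions by a small complex \emph{constant},
\[
F_1=f_1+\eta_1,\quad G_1=g_1+\eta_2,\quad F_2=f_2+\eta_3,\quad G_2=g_2+\eta_4,
\]
with $\eta_1,\eta_2,\eta_3,\eta_4\in\C$ of modulus less than $\e/4$ to be chosen. With this Ansatz (C5) is immediate, and (C3), (C4) come for free from (P1), (P2): since $|f_1|+|g_1|\ge\delta$ on $\Omega_1$ we get $|F_1|+|G_1|\ge\delta-|\eta_1|-|\eta_2|>0$ once the constants are small, so $(F_1,G_1)$ is invertible on $\Omega_1$ by the corona theorem, and likewise $(F_2,G_2)$ on $\Omega_2$. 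Thus the whole problem reduces to choosing the four constants so that the cross pairs $(F_1,G_2)$ and $(G_1,F_2)$ become invertible on $\Omega_1\inter\Omega_2$, i.e. (C1), (C2).

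By the corona theorem for $\Omega_1\inter\Omega_2$, (C1) holds as soon as $(F_1,G_2)$ has no common zero in $M(\H(\Omega_1\inter\Omega_2))$. Every such homomorphism lies over a point of $\ov{\Omega_1\inter\Omega_2}=(\Omega_1\inter\Omega_2)\cup\partial D_1\cup\partial D_2$, and I will rule out common zeros over each piece separately. Since $\ov{D_2}\ss\Omega_1$ and $\ov{D_1}\ss\Omega_2$, the function $F_1$ is analytic across $\partial D_2$ and $G_2$ across $\partial D_1$. By the last assertion of Lemma \ref{lemma_1} (an interior point of $\Omega_1$ has trivial fiber in $M(\H(\Omega_1))$), over $\xi\in\partial D_2\ss\Omega_1$ every $\varphi$ in the fiber evaluates $F_1$ at the single value $F_1(\xi)$, so a common zero there forces $F_1(\xi)=0$; symmetrically, over $\xi\in\partial D_1\ss\Omega_2$ it forces $G_2(\xi)=0$; and over an interior point $\lambda$ it forces $F_1(\lambda)=G_2(\lambda)=0$.

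Hence it suffices to choose $\eta_1,\eta_4$ so that (i) $F_1=f_1+\eta_1$ has no zero on the circle $\partial D_2$, (ii) $G_2=g_2+\eta_4$ has no zero on $\partial D_1$, and (iii) $f_1+\eta_1$ and $g_2+\eta_4$ have no common zero in $\Omega_1\inter\Omega_2$. For (i) I use that $f_1$ is \emph{nonconstant}: by the identity theorem it is not constant on $\partial D_2$, so $f_1|_{\partial D_2}$ is a nonconstant real-analytic map and its image $f_1(\partial D_2)$ is a compact subset of $\C$ of planar measure zero; its complement is dense, so any $\eta_1$ with $-\eta_1\notin f_1(\partial D_2)$ and $|\eta_1|<\e/4$ makes $F_1$ zero-free on $\partial D_2$, and such $\eta_1$ exist arbitrarily close to $0$. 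Symmetrically $g_2(\partial D_1)$ has measure zero, giving (ii). For (iii), with $\eta_1$ now fixed the zeros of $f_1+\eta_1$ in $\Omega_1\inter\Omega_2$ form a countable discrete set $\{w_j\}$, and (iii) fails only for the countably many values $\eta_4\in\{-g_2(w_j)\}$, again of measure zero. Thus a generic small $(\eta_1,\eta_4)$ yields (C1); an independent generic small choice of $(\eta_2,\eta_3)$, using the nonconstancy of $g_1$ and $f_2$ in the same way, yields (C2).

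The main obstacle is precisely the behaviour over the two boundary circles: there one partner may vanish in the limit on a large closed subset of the circle, so one cannot separate the common zeros point by point. What rescues the argument is that on each such circle the \emph{other} partner is analytic and nonconstant, hence of measure-zero image, so a single small constant shift clears it off $0$ along the \emph{entire} circle at once; the interior common zeros are then eliminated by routine countable avoidance. Finally, for the ``in particular'' statement, a common zero of $(F_1F_2,G_1G_2)$ in $M(\H(\Omega_1\inter\Omega_2))$ would force one of the four alternatives $F_1=G_1=0$, $F_2=G_2=0$, $F_1=G_2=0$, $G_1=F_2=0$, contradicting (C3), (C4), (C1), (C2) respectively; so $(F_1F_2,G_1G_2)$ is invertible on $\Omega_1\inter\Omega_2$ by the corona theorem.
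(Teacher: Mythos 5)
Your proof is correct, but it takes a genuinely different route from the paper's. The paper proves (C1) and (C2) by invoking the factorization part of Lemma \ref{lemma_1}: for the cross pair $(f_1,g_2)$ the common zeros lie in the finite sets $Z_1\union Z_2\union Z_3$, they are split off as explicit linear factors, and the proof then moves these finitely many zeros slightly so that no common zero survives; taking the displacements small enough also keeps $(F_1,G_1)$ and $(F_2,G_2)$ invertible. You instead perturb by generic additive constants, and you use from Lemma \ref{lemma_1} only its final assertion about fibers; the potential common zeros over each boundary circle are killed because the image of that circle under a nonconstant analytic function has planar measure zero, and the interior ones by countable avoidance. What your approach buys: it bypasses the finiteness/factorization content of Lemma \ref{lemma_1} entirely, and it makes (C3)--(C5) automatic from (P1), (P2) and smallness --- in particular you never need to verify that relocating zeros is a small sup-norm perturbation, a point the paper leaves implicit and which needs a little care on these unbounded domains, where the factors $(z-z_j)$ are unbounded. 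What the paper's approach buys: it is more explicit about where the obstructions sit, and the factorization machinery is what carries over verbatim to the real-symmetric case needed for Theorem \ref{sym}. Two cosmetic remarks on your write-up: the parenthetical gloss ``an interior point of $\Omega_1$ has trivial fiber in $M(\H(\Omega_1))$'' is imprecise --- the statement you actually use, and the one Lemma \ref{lemma_1} provides, is that every $\varphi\in M_\xi(\H(\Omega_1\inter\Omega_2))$ with $\xi\in\partial D_2\ss\Omega_1$ restricts to evaluation at $\xi$ on the subalgebra $\H(\Omega_1)$; and the implication ``no common zero in $M(\H(\Omega_1\inter\Omega_2))$ implies invertible'' is elementary Gelfand theory rather than the corona theorem (the corona theorem is what you need, and correctly use, for (C3) and (C4)).
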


\begin{proof} 
  Consider the pair $(f_1,g_2)\in \H(\Omega_1)\times \H(\Omega_2)$. By
  Lemma \ref{lemma_1} we may perturb the finitely many zeros of $f_1$
  belonging to $S_2\union S_3$ and those of $g_2$ that lie in $S_1$ so
  that the new functions $F_1$ and $G_2$ form an invertible pair in
  $\H(\Omega_1\inter \Omega_2)$. Now we do the same with the pair
  $(g_1,f_2)$ in $\H(\Omega_1)\times \H(\Omega_2)$. This gives an
  invertible pair $(G_1,F_2)\in \H(\Omega_1\inter \Omega_2)$. By
  choosing these perturbations sufficiently small, we see that the
  pairs $(F_1,G_1)$ and $(F_2,G_2)$ stay invertible in the associated
  space $\H(\Omega_1)$, respectively $\H(\Omega_2)$.  This yields that
  $(F_1F_2,G_1G_2)$ is invertible in $\H(\Omega_1\inter \Omega_2)$.
\end{proof}

It is clear that an analogous version holds true for the symmetric
  case.

\section{Proof of tsr($H^\infty(\Omega)$)$=2$}

\begin{proof}[Proof of Theorem \ref{main_thm}] 
  Let $f, g \in H^\infty(\Omega)$. By Proposition \ref{to_fact}, we can
  write
\begin{eqnarray*}
f&=& f_0 \cdot f_1 \cdot \dots \cdot f_{n-1} \cdot r, \\
g&=& g_0 \cdot g_1 \cdot \dots \cdot g_{n-1} \cdot s.
\end{eqnarray*} 
where $f_j$ and $g_j\in \H(\Omega_j)$. We note that since the rational
functions $r, s$ have zeros and poles only in the set $\{a_1, \dots,
a_{n-1}\}$, it follows that $r, s$ are invertible in
$H^\infty(\Omega)$.  Since each $\Omega_i$ is simply connected, it
follows from the fact that the topological stable rank of
$H^\infty(\D)$ is $2$ and the Riemann mapping theorem, that also the
topological stable rank of $H^\infty(\Omega_i)$ is equal to $2$. Hence
the pairs $(f_0,g_0), \dots, (f_{n-1},g_{n-1})$ can be replaced by
unimodular pairs $(\widetilde{f}_0, \widetilde{g}_0), \dots,
(\widetilde{f}_{n-1}, \widetilde{g}_{n-1})$ such that for every
$i=0,1, \dots, n-1$
$$
\|f_i- \widetilde{f}_i\|_\infty+ \|g_i -\widetilde{g}_i\|_\infty <\epsilon.
$$ 
By a repeated application of Lemma \ref{lemma_2} to the pairs
$(\widetilde{f}_k, \widetilde{g}_j)$ with $j\neq k$, we get the existence of 
$F_0, \dots, F_{n-1}$, $G_0, \dots,G_{n-1}$, such that 
$$
\|F_k-f_k\|_\infty+\|G_k-g_k\|_\infty<\epsilon,
$$
and the pair $(F_k, G_j)$ is unimodular in $H^\infty (\Omega_k \cap \Omega_j)$
for all $0\leq k,j\leq n-1$. By the elementary theory of Banach
algebras, it follows that there exists a $\delta>0$ such that 
$$
|F_k(z) |+|G_j(z)| \geq \delta \quad (z\in \Omega_k \cap \Omega_j).
$$
Thus there exists a $\delta'>0$ such that with 
\begin{eqnarray*}
\widetilde{f}&:=&F_0\cdot F_1 \cdot \dots \cdot F_{n-1} \cdot r,\\
\widetilde{g}&:=& G_0 \cdot G_1 \cdot \dots \cdot G_{n-1} \cdot s,
\end{eqnarray*} 
we have for all $z\in \Omega=\Omega_0\cap \dots \cap \Omega_{n-1}$, 
$$
|\widetilde{f}(z)|+|\widetilde{g}(z)|\geq \delta'.
$$
By the corona theorem for $H^\infty(\Omega)$, we obtain that
$(\widetilde{f}, \widetilde{g})$ is a unimodular pair in
$H^\infty(\Omega)$. Also, it can be seen that given $\epsilon'>0$, we can
choose $\epsilon>0$ small enough at the outset so that
$$
\|f-\widetilde{f}\|_\infty +\|f-\widetilde{g}\|_\infty 
\leq \epsilon'.
$$
This completes the proof.
\end{proof}

The same proof shows that the topological stable rank of
$\H_\R(\Omega)$ is $2$ as well.  Since the unimodular pair $(z,
1-z^2)$ is not reducible (here we assume that $]-1,1[\;\ss\Omega$,
$-1,1\notin\Omega$,) we have that the Bass stable rank of
$\H_\R(\Omega)$ is not one.  Since the Bass stable rank is always less than the
topological stable rank, we obtain that it must be $2$.

\end{document}